
\documentclass{modlms}

\usepackage{amssymb,amsmath,amscd,xspace,paralist}
\usepackage{graphicx}

\newtheorem{theorem}{Theorem}[section]
\newtheorem{corollary}[theorem]{Corollary}

\newnumbered{remark}[theorem]{Remark}
\newnumbered{remarks}[theorem]{Remarks}

\bibliographystyle{plain}

\def\cF{{\mathcal{F}}}
\def\cE{{\mathcal{E}}}
\def\cC{{\mathcal{C}}}
\def\cH{{\mathcal{H}}}

\def\de#1{\textit{#1}}

\def\R{{\mathbb R}}
\def\Z{{\mathbb Z}}
\def\N{{\mathbb N}}
\def\C{{\mathbb C}}

\def\raw{\rightarrow}

\def\ux{{\underline x}}
\def\uy{{\underline y}}

\def\I{^{-1}}

\def\td{\tilde{d}}
\def\tx{{\tilde{x}}}
\def\tz{{\tilde{z}}}
\def\tf{\tilde{f}}
\def\tg{\tilde{g}}
\def\tM{\tilde{M}}
\def\tE{\tilde{E}}
\def\tPhi{\tilde{\Phi}}
\def\tLambda{\tilde{\Lambda}}

\def\htop{h_{top}}
\def\homeo{homeomorphism}
\def\homeos{homeomorphisms}
\def\BR{boundary retract\xspace}

\DeclareMathOperator{\id}{id}
\DeclareMathOperator{\Per}{Per}
\DeclareMathOperator{\diam}{diam}

\def\barg{\overline{g}}
\def\barf{\overline{f}}

\def\ilim{{\varprojlim}}
\def\bd{{\partial}}

\title[Inverse limits in parameterized families]%
{Inverse limits as attractors in parameterized families}

\date{May 2012}

\author{Philip Boyland, Andr\'e de Carvalho, and Toby Hall}

\classno{54H20 (primary), 37B45, 37E05, 37E10, 37E30, 37E45
  (secondary)}

\extraline{The authors are grateful for the support of FAPESP grants
  numbers~2006/03829-2 and 2010/09667-0}

\begin{document}
\maketitle

\begin{abstract}
We show how a parameterized family of maps of the spine of a manifold
can be used to construct a family of \homeos\ of the ambient manifold
which have the inverse limits of the spine maps as global attractors.
We describe applications to unimodal families of interval maps, to
rotation sets, and to the standard family of circle maps.
\end{abstract}

\section{Introduction}
The use of inverse limits to construct and analyze examples has been
an important tool in dynamical systems since Williams's
work~\cite{williams} on expanding attractors. Given a continuous
self-map $f\colon X\raw X$ of a metric space, its natural extension
$\hat{f}$ is a self-homeomorphism of the inverse limit space
$X_\infty=\ilim(X,f)$. The natural extension $\hat{f}\colon
X_\infty\raw X_\infty$ is, in a precise sense, the dynamically
minimal extension of~$f$ to a homeomorphism.  The space~$X_\infty$ is
defined abstractly as a subspace of $X^\N$, but in many examples the
inverse limit can be embedded inside a manifold~$M$ which has the
original space~$X$ as a spine, and $\hat{f}\colon X_\infty \raw
X_\infty$ can be extended to a self-homeomorphism of~$M$ for which
$X_\infty$ is a global attractor.

In the simplest case, this provides a method for constructing
homeomorphisms of surfaces from endomorphisms of graphs having the
same homotopy type as the surface: the surface homeomorphisms have
attracting sets -- generally with complicated topology -- on which the
dynamics is derived from that of the graph endomorphism. This
construction is useful because it is much easier to construct and
analyze graph endomorphisms than surface homeomorphisms. More
generally, the technique can often be used to embed non-invertible
dynamics as an attractor of a higher-dimensional invertible system.

Barge and Martin~\cite{bargemartin} systematized this idea, describing
a construction to embed the inverse limit of any interval endomorphism
as a global attractor of a plane homeomorphism. As they commented,
their construction readily generalizes to graphs other than the
interval and to higher dimensions, and this generalization is
a special case of the results presented here: a continuous self-map
$f\colon X\raw X$ of a {\em boundary retract}~$X$ of a compact
manifold~$M$ gives rise to an appropriate homeomorphism $\Phi\colon
M\raw M$ provided that~$f$ satisfies a certain topological condition
(it {\em unwraps} in~$M$).

The main purpose of this paper is to develop a parameterized
version of the Barge-Martin construction. Continuously varying
families of maps are of central importance in dynamical systems
theory: apart from their obvious relevance in modelling, one of the
best ways to understand complicated dynamics is to study the way that
it is built up from simple dynamics in parameterized families. The
main result of this paper, Theorem~\ref{BBMparam}, states that the
Barge-Martin construction can be carried out for a parameterized
family~$f_t\colon X\raw X$ in such a way as to yield a continuously
varying family~$\Phi_t\colon M\raw M$ of homeomorphisms, provided
that each~$f_t$ unwraps in~$M$. Under a mild additional assumption
(that there is some~$m>0$ such that $f_t^{m+1}(X)=f_t^m(X)$ for
all~$t$), the attractors~$\Lambda_t$ of~$\Phi_t$ (on which the
dymamics is given by the natural extension of~$f_t$) vary Hausdorff
continuously with the parameter.

The main tool in the Barge-Martin construction is a theorem of Morton
Brown~\cite{brown}, that the inverse limit of a near-homeomorphism of
a compact metric space~$X$ is homeomorphic to~$X$. The parameterized
version of the construction requires this homeomorphism to vary
continuously with the near-homeomorphism, and this extension of
Brown's theorem is presented in Section~\ref{sec:brown}
(Theorem~\ref{brownplus} and Corollary~\ref{ourthing}). The main
theorem of the paper, the parameterized Brown-Barge-Martin (BBM)
construction, is contained in Section~\ref{sec:BBM}.

Section~\ref{apps} contains a brief summary of each of three areas of
application. In Section~\ref{sec:unimodal}, the construction is
applied to the tent family and the quadratic family of unimodal
interval endomorphisms to provide families of homeomorphisms of the
disk~$D^2$ with monotonically increasing dynamics. The inverse limits
of these unimodal maps, their embeddings as attractors of
homeomorphisms, and their relationship to H\'enon maps have been much
studied, and we relate our construction to other work in this area.

The original motivation for this paper was an attempt to understand
the rotation sets that arise in continuously varying families of
homeomorphisms of the torus. This problem, which had previously
resisted analysis, becomes tractable in certain cases when it is
reduced to a one-dimensional problem, allowing it to be attacked using
methods of kneading theory. Section~\ref{sec:rotation} provides a
general description of the relationship between the rotation sets of
the endomorphisms $f_t\colon X\raw X$ and those of the homeomorphisms
$\Phi_t\colon M\raw M$. In Section~\ref{sec:standard}, this is applied
in another example, Arnol'd's {\em standard family} $f_{b,w}$ of
circle endomorphisms, yielding a continuously varying family
$\Phi_{b,w}$ of annulus homeomorphisms with the same rotation sets
as~$f_{b,w}$.

\subsection*{Definitions and notation}
Let~$(X,d)$ and~$(Y,e)$ be compact metric spaces. We write $\cC(X,Y)$
and $\cH(X,Y)$ respectively for the spaces of continuous maps and
\homeos\ $X\raw Y$, endowed with the uniform metric.

A map $f\in\cC(X,Y)$ is a \de{near-homeomorphism} if it is the
uniform limit of homeomorphisms: that is, if it lies in the closure of
$\cH(X,Y)$ in $\cC(X,Y)$. Every near-homeomorphism is onto,
being the uniform limit of continuous surjections from a compact
space. 

Let~$I$ be a compact metric space, which will be considered as a
parameter space. A continuous family $\{f_t\}_{t\in I}$ in~$\cC(X,Y)$
is a \de{near-isotopy} if the map $X\times I \to Y$ given by
$(x,t)\mapsto f_t(x)$ can be uniformly approximated by maps
of the form $(x,t)\mapsto h_t(x)$, where $\{h_t\}_{t\in I}$ is a
continuous family in $\cH(X,Y)$. The term near-isotopy and the
notation~$I$ are intended to suggest the ``standard'' case
$I=[0,1]$.

The question of whether or not a continuous family of
near-homeomorphisms is necessarily a near-isotopy appears to be subtle.
If~$I=[0,1]$ and~$X=Y$ is a compact manifold, then this follows
straightforwardly from the deep result of Edwards and
Kirby~\cite{edwardskirby} that~$\cH(X,X)$ is uniformly locally contractible and
thus  uniformly locally path connected.

Let $\epsilon>0$. A map $g\in\cC(X,Y)$ is called an \de{$\epsilon$-map} if
$g(x_1)=g(x_2)$ implies $d(x_1,x_2)<\epsilon$. The 
set~$\cC_\epsilon(X,Y)$ of continuous $\epsilon$-maps from~$X$ to~$Y$
is an open subset of~$\cC(X,Y)$ since~$X$ is compact.

The set of natural numbers~$\N$ is considered to include~$0$. We will
use a standard metric~$d_\infty$ on the product space~$X^\N$, defined
by
\[d_\infty\left(\ux, \uy\right)=
\max_{i\in\N}\,\,\frac{\min(d(x_i,y_i),1)}{i+1}, \] so that
$d_\infty(\ux,\uy)\le 1/(k+2)$ if $x_i=y_i$ for $0\le i\le k$.  To avoid
excessive notation, we will generally denote the metric on any metric
space indiscriminately by~$d$.

\section{Inverse limits and families of inverse limits}   
\label{sec:brown}
Recall that if $X$ is a metric space and $f\colon X\raw X$ is continuous,
the \de{inverse limit} is the metric space
\begin{equation*}
\ilim(X, f) = \{\ux\in X^\N: f(x_{i+1}) = x_i \ 
\text{for all}\ i\in\N \} \subset X^\N.
\end{equation*}
 Where there is no ambiguity regarding the self-map~$f$, we will use
 the shorter notation $X_\infty$ for $\ilim(X, f)$. The projection
 $\ux\mapsto x_k$ from~$X_\infty$ to the $k^\text{th}$ coordinate is
 denoted $\pi_k\colon  X_\infty\raw X$.

The \de{natural extension} of $f\colon X\raw X$ is the
\homeo\ $\hat{f}\colon X_\infty\raw X_\infty$ defined by \mbox{$\hat{f}(\ux) =
(f(x_0), x_0, x_1, \dots)$}. The self-map~$f$ is semi-conjugate to its
natural extension provided that~$f$ is onto, since $f\circ \pi_0 =
\pi_0\circ\hat{f}$ and~$\pi_0$ is onto if and only if~$f$ is. The
relationship between the dynamics of~$f$, the dynamics of~$\hat{f}$,  and
the topology of~$\ilim(X, f)$ has been much
studied: we refer the reader to the book~\cite{ingrambook} and 
the references therein for more information.
One simple fact that we need here is 
that  $\pi_0$ restricts to a bijection from the set 
of periodic points of~$\hat{f}$ to the set of periodic points of~$f$.

If $f\colon X\raw X$ is a \homeo, then it is clear that $\pi_0\colon X_\infty\to
X$ is a homeomorphism which conjugates~$f$ and~$\hat{f}$. Morton
Brown~\cite{brown} shows that if $X$ is compact and $f$ is a
near-homeomorphism, then $X_\infty$ is still homeomorphic to~$X$.

As noted in the introduction, the parameterized BBM-construction
requires a parameterized version of Brown's Theorem applicable to
near-isotopies. Let~$I$ be a compact parameter space
and~$\{f_t\}_{t\in I}$ be a near-isotopy of a compact metric
space~$X$, and for each~$t\in I$ write $X_\infty^t$ for the inverse limit
$\ilim(X, f_t)$. Brown's theorem provides a homeomorphism $h_t\colon 
X_\infty^t \raw X$ for each~$t$. The parameterized version of Brown's
theorem guarantees that the family~$\{h_t\}$ can be chosen to vary
continuously with~$t$: the easiest way to formulate this is using the
language of fat maps.

Recall that if $\{f_t\}_{t\in I}$ is a continuous family of self-maps
of~$X$, the corresponding \de{fat map} $F\colon X\times I \raw X\times I$ is
defined by
\begin{equation}\label{fat}
F(x,t) = (f_t(x), t).
\end{equation}

A function $G\colon X\times I \raw X\times I$ is called
\de{slice-preserving} if it has the property that
\mbox{$G(X\times\{t\})\subset X\times\{t\}$} for all $t\in I$: we
write $\cC^s(X\times I,X\times I)$ and $\cH^s(X\times I,X\times I)$
for the subsets of slice-preserving elements of $\cC(X\times I,X\times
I)$ and $\cH(X\times I,X\times I)$. Any \mbox{$F\in\cC^s(X\times I,
  X\times I)$} can be written in the form~\eqref{fat}, with $f_t(x)$
the first component of $F(x,t)$.

\begin{remark}
\label{near-isotopy}
It follows immediately from the definitions that the family $\{f_t\}$
in $\cC(X,X)$ is a near-isotopy if and only if its fat map $F\colon X\times
I \raw X\times I$ lies in the closure of \mbox{$\cH^s(X\times I,
  X\times I)$} in $\cC^s(X\times I, X\times I)$.
\end{remark}

The inverse limit $(X\times I)_\infty$ of the fat map $F\colon X\times I\to
X\times I$ provides a natural topology for the family of inverse
limits $X_\infty^t=\ilim(X, f_t)$. Specifically, for each \mbox{$t\in I$}
there is a natural embedding $\iota_t \colon  X_\infty^t \to (X\times
I)_\infty$ given by $\iota_t(x_0, x_1, \ldots) = ((x_0,t), (x_1,t),
\ldots)$; moreover, \mbox{$(X\times I)_\infty$} is the disjoint union of the
subsets $\iota_t(X_\infty^t)$, and
\begin{equation}
\label{CD}
\begin{CD}
X_\infty^t @>\hat{f}_t>> X_\infty^t \\
@VV\iota_t V @VV\iota_t V\\
(X\times I)_\infty @>\hat{F}>> (X\times I)_\infty
\end{CD}
\end{equation}
commutes for each $t\in I$.

The parameterized version of Brown's theorem is:

\begin{theorem}\label{brownplus}
Let $\{f_t\}_{t\in I}$ be a near-isotopy of the compact metric space
$X$, and \mbox{$F\colon X\times I\raw X\times I$} be the corresponding fat
map.  Then for all~$\epsilon>0$ there exists a \homeo\ $\beta\colon 
(X\times I)_\infty\raw X\times I$ such that

\medskip

\begin{enumerate}[(a)]
\item  $\beta\circ \iota_t(X_\infty^t) = X\times
\{t\}$ for all $t\in I$, and
\item $d(\beta, \pi_0) < \epsilon$.
\end{enumerate}
\end{theorem}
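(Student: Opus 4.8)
The plan is to mimic Morton Brown's original proof of his theorem, carried out uniformly in the parameter, and to exploit Remark~\ref{near-isotopy} to work entirely in the slice-preserving category. Recall the structure of Brown's argument: given a near-homeomorphism $f\colon X\raw X$, one builds a homeomorphism $\beta\colon X_\infty\raw X$ as a uniform limit of maps $\beta_n\colon X_\infty\raw X$, where $\beta_n$ is obtained by following the projection $\pi_n$ with a carefully chosen homeomorphism of $X$ that ``corrects'' $f^n$ back towards the identity on smaller and smaller scales. The key mechanism is that, because $f$ is a near-homeomorphism, for any $\delta>0$ and any homeomorphism $g$ close to $f$ there is a homeomorphism $g'$ with $d(g',g)<\delta$ and $d(g'\circ\pi_{n+1}, \pi_n)$ small on the relevant set; iterating and summing a convergent series of error bounds yields a Cauchy sequence whose limit is a homeomorphism onto $X$. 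My intention is to run exactly this induction for the fat map $F$, at each stage choosing the approximating homeomorphism to be \emph{slice-preserving} (possible by Remark~\ref{near-isotopy}), so that the resulting limit homeomorphism $\beta$ of $(X\times I)_\infty$ respects slices, and then to observe that ``respecting slices'' is precisely condition~(a).

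First I would set up the inductive scheme. Fix $\epsilon>0$ and choose a summable sequence $\epsilon_0=\epsilon/2 > \epsilon_1 > \epsilon_2 > \cdots$ with $\sum\epsilon_n<\epsilon$. I would construct inductively a sequence of slice-preserving homeomorphisms $H_n\colon X\times I\raw X\times I$ and associated maps $\beta_n = H_n\circ\pi_n\colon (X\times I)_\infty\raw X\times I$ such that (i) $d(\beta_n,\beta_{n+1})<\epsilon_n$ on $(X\times I)_\infty$, and (ii) $\beta_n$ is a $\delta_n$-map for a sequence $\delta_n\to 0$. The construction of $H_{n+1}$ from $H_n$ uses uniform continuity of $H_n$ together with the fact that $F$ lies in the closure of $\cH^s(X\times I,X\times I)$: one approximates $F$ sufficiently well by a slice-preserving homeomorphism to guarantee that $H_n\circ(\text{that approximation})$ is close to $H_n\circ F$, hence (since $\pi_n = F\circ\pi_{n+1}$ on the inverse limit) close to $\beta_n$ on $(X\times I)_\infty$; absorbing the approximating homeomorphism into $H_n$ gives $H_{n+1}$. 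The injectivity/$\epsilon$-map bookkeeping is handled exactly as in Brown's proof, using that the projections $\pi_n$ eventually separate points of the inverse limit at any prescribed scale. The limit $\beta=\lim\beta_n$ then exists, is a homeomorphism onto $X\times I$ by the usual compactness argument (a uniform limit of homeomorphisms of a compact space which is an $\epsilon$-map for every $\epsilon>0$ is a homeomorphism), and satisfies $d(\beta,\pi_0)\le\sum_{n\ge 0}d(\beta_n,\beta_{n+1})<\epsilon$, giving~(b) — here I would take $\beta_0 = \pi_0$, i.e.\ $H_0=\mathrm{id}$.

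For condition~(a), the point is that every $H_n$ is slice-preserving and $\pi_n(\iota_t(X_\infty^t))\subset X\times\{t\}$ by definition of $\iota_t$; hence each $\beta_n$ maps $\iota_t(X_\infty^t)$ into $X\times\{t\}$, and so does the limit $\beta$. Since $\beta$ is a homeomorphism and $(X\times I)_\infty$ is the disjoint union of the $\iota_t(X_\infty^t)$ while $X\times I$ is the disjoint union of the slices $X\times\{t\}$, the inclusion $\beta(\iota_t(X_\infty^t))\subset X\times\{t\}$ must in fact be an equality: indeed $\bigcup_t\beta(\iota_t(X_\infty^t)) = \beta((X\times I)_\infty) = X\times I = \bigcup_t (X\times\{t\})$, and these are disjoint decompositions with $\beta(\iota_t(X_\infty^t))\subset X\times\{t\}$, forcing equality slice by slice. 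This yields~(a).

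The main obstacle I anticipate is the bookkeeping needed to keep the construction inside $\cH^s$ while simultaneously controlling the three competing requirements at each stage — closeness of consecutive $\beta_n$, the $\epsilon$-map condition (which drives injectivity of the limit), and the fact that the correction must be realized by a genuine \emph{slice-preserving} homeomorphism rather than merely a near-homeomorphism. Remark~\ref{near-isotopy} is exactly what makes the last point go through: it says $F$ is approximable by elements of $\cH^s(X\times I,X\times I)$, so one never leaves the slice-preserving category, and one can then import Brown's scheme verbatim with ``homeomorphism'' replaced by ``slice-preserving homeomorphism'' throughout. A secondary subtlety is the $\epsilon$-map estimate in the presence of the product metric $d_\infty$ on $(X\times I)^\N$: one must verify that a map agreeing with $\pi_n$ up to a homeomorphism separates points of $(X\times I)_\infty$ at a scale tending to $0$ as $n\to\infty$, which follows from compactness of $(X\times I)_\infty$ together with the observation that two distinct points of an inverse limit differ in some coordinate, and coordinates far out contribute little to $d_\infty$ — so the separation is effectively controlled by finitely many $\pi_j$ with $j\le n$. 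I expect no essential new difficulty beyond careful uniform estimates; the role of the parameterized Edwards--Kirby input is already absorbed into the hypothesis that $\{f_t\}$ is a near-isotopy (equivalently, into Remark~\ref{near-isotopy}).
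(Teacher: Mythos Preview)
Your proposal is correct and will go through, but it follows a different route from the paper's proof. The paper adapts Ancel's Baire category argument rather than Brown's original inductive construction: it takes $\cF$ to be the closure in $\cC((X\times I)_\infty, X\times I)$ of the set $\cE=\{H\circ\pi_k : H\in\cH^s(X\times I,X\times I),\ k\in\N\}$, observes that every element of $\cF$ is onto and slice-respecting, shows that the set of $\delta$-maps is open and dense in $\cF$ for every $\delta>0$ (using exactly the identity $\pi_k = F^{j-k}\circ\pi_j$ and Remark~\ref{near-isotopy} that you also invoke), and then appeals to Baire to get injections dense in $\cF$; since $\pi_0\in\cF$, a homeomorphism $\beta$ arbitrarily close to $\pi_0$ exists. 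Your explicit Cauchy-sequence construction and the paper's Baire argument rest on the same two ingredients --- slice-preserving approximation of $F$ and the fact that $\pi_n$ is a $1/(n+2)$-map --- so they are really two packagings of the same idea. The Baire version is shorter and sidesteps the delicate interleaving of the $\epsilon_n$ with the openness of the $\epsilon$-map condition that you flag as ``bookkeeping''; your version is more explicit and closer to Brown's original. One small slip: your $\beta_n=H_n\circ\pi_n$ are surjections, not homeomorphisms, so the clause ``uniform limit of homeomorphisms'' should read ``uniform limit of surjections''; the conclusion (onto plus $\epsilon$-map for all $\epsilon$ implies homeomorphism on a compact space) is of course unaffected.
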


\begin{proof}
The proof given here is an adaptation of 
Ancel's short and elegant proof~\cite{ancel} of Brown's Theorem. 

To simplify notation, write $Z = X\times I$ and $Z_\infty = (X\times
I)_\infty$. Observe that for each~$k\in\N$, the projection
$\pi_k\colon Z_\infty \raw Z$ is a $1/(k+2)$-map, since if
$\pi_k(\ux)=\pi_k(\uy)$ then $x_i=y_i$ for $0\le i\le k$.

Let~$\cF=\overline{\cE}$ be the closure in~$\cC(Z_\infty, Z)$ of
\[\cE = \{H\circ \pi_k\,:\,H\in \cH^s(Z,Z) \text{ and }k\in\N
\}.\] Observe that every~$\alpha\in\cF$ is onto (each~$\pi_k$ is onto,
since $F$ is a near-homeomorphism, so that~$\alpha$ is the uniform
limit of onto maps defined on a compact space), and moreover satisfies
$\alpha\circ\iota_t(X_\infty^t) = X\times\{t\}$ for all~$t$ (for
$H\circ\pi_k\circ\iota_t(\ux)=H(x_k,t) \in X\times\{t\}$, and $x_k$
takes every value in~$X$ since~$f_t$ is onto). We will show that
injections (and therefore homeomorphisms which satisfy~(a) of the
theorem statement) are dense in~$\cF$: this will complete the proof,
with (b) following because~$\pi_0\in\cF$.

Given $\delta > 0$,
let $\cF(\delta) = \cF\cap \cC_\delta(Z_\infty, Z)$, an open
subset of~$\cF$. We shall show that it is also dense in~$\cF$. For
this, it suffices to show that for every $H\circ\pi_k\in\cE$ and every
$\eta>0$, there is some $G\in\cF(\delta)$ with $d(G,
H\circ\pi_k)<\eta$. To find such a~$G$, pick $j\ge k$ with
$1/(j+2)<\delta$, and observe that
\[H\circ \pi_k = H \circ F^{j-k} \circ \pi_j.\]
By Remark~\ref{near-isotopy}, $F^{j-k}$ can be approximated
arbitrarily closely by elements of $\cH^s(Z,Z)$, so in particular
there is some $H'\in \cH^s(Z,Z)$ with \mbox{$d(H\circ H'\circ \pi_j, H\circ
\pi_k)<\eta$}. Then $G=H\circ H'\circ\pi_j$ lies in~$\cF(\delta)$
since $\pi_j$ is a $1/(j+2)$-map and $H\circ H'$ is a homeomorphism.

By the Baire category theorem on the complete space~$\cF$, the set
$\bigcap_{n\ge 1}\,\cF(1/n)$ is dense in~$\cF$. However, elements of
this set are injective since they are $1/n$-maps for all $n\ge 1$,
completing the proof that injections are dense in~$\cF$ as required.
\end{proof}

The following corollary will be used in the parameterized Barge-Martin
construction. 

\begin{corollary}\label{ourthing}
 Let~$\{f_t\}_{t\in I}$ be a near-isotopy of the compact metric space~$X$, and
 let the natural extension of $f_t$ to the inverse limit $X_\infty^t$
 be denoted $\hat f_t$. Then for all~$\epsilon>0$ there exist
 homeomorphisms $h_t\colon X_\infty^t \raw X$ for each~$t$ such that

\medskip

\begin{enumerate}[(a)]
\item $h_t\circ \hat{f}_t\circ h_t\I$ is a continuous family of
  homeomorphisms of~$X$, and
\item $d(h_t,\pi_{0,t})<\epsilon$ for all~$t$, where~$\pi_{0,t}\colon X_\infty^t\raw X$
  is projection to the $0^\text{th}$ coordinate.
\end{enumerate}

\medskip

Moreover, if~$X$ is a compact manifold and $\bd X$ is totally
invariant under~$f_t$ for all~$t$, then
\[\bd X_\infty^t = \{\ux\in X_\infty^t\,:\,x_0\in\bd X\}.\]
\end{corollary}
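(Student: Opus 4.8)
The plan is to obtain the homeomorphisms $h_t$ by slicing the homeomorphism $\beta$ provided by Theorem~\ref{brownplus}. Fix $\epsilon>0$ and apply that theorem to get $\beta\colon (X\times I)_\infty\raw X\times I$ with $\beta\circ\iota_t(X_\infty^t)=X\times\{t\}$ for all $t$ and $d(\beta,\pi_0)<\epsilon$. Since $\iota_t$ is an embedding and $\beta$ carries $\iota_t(X_\infty^t)$ homeomorphically onto the slice $X\times\{t\}$, I would define $h_t\colon X_\infty^t\raw X$ as $\beta\circ\iota_t$ followed by the coordinate projection $X\times\{t\}\raw X$; each $h_t$ is then a homeomorphism. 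Property~(b) is immediate: for $\ux\in X_\infty^t$ the points $\beta(\iota_t(\ux))=(h_t(\ux),t)$ and $\pi_0(\iota_t(\ux))=(x_0,t)$ lie in a common slice, so their distance in $X\times I$ equals $d(h_t(\ux),x_0)=d(h_t(\ux),\pi_{0,t}(\ux))$; hence $d(h_t,\pi_{0,t})\le d(\beta,\pi_0)<\epsilon$.

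For~(a) I would use the commuting diagram~\eqref{CD}, which gives $\iota_t\circ\hat f_t=\hat F\circ\iota_t$. Consider $G=\beta\circ\hat F\circ\beta\I$, a homeomorphism of $X\times I$. It is slice-preserving: $\hat F$ maps each $\iota_t(X_\infty^t)$ onto itself (because $\hat f_t$ is a homeomorphism of $X_\infty^t$), and $\beta$ carries that set onto $X\times\{t\}$, so $G(X\times\{t\})=X\times\{t\}$. Thus $G\in\cH^s(X\times I,X\times I)$ and is the fat map of a continuous family $\{g_t\}$ in $\cH(X,X)$; unwinding the definition of $h_t$ gives $G(h_t(\ux),t)=\beta\hat F\iota_t(\ux)=\beta\iota_t\hat f_t(\ux)=(h_t(\hat f_t(\ux)),t)$, i.e.\ $g_t=h_t\circ\hat f_t\circ h_t\I$. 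Hence $\{h_t\circ\hat f_t\circ h_t\I\}$ is a continuous family of homeomorphisms of $X$.

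For the final assertion, assume $X$ is a compact manifold and $\bd X$ is totally invariant under each $f_t$; then $\bd X\times I$ is totally invariant under $F$, and each $X_\infty^t$ is a compact manifold since $h_t$ identifies it with $X$. The point is that a slice-preserving homeomorphism of $X\times I$ restricts to a homeomorphism of each slice $X$, and a homeomorphism of a compact manifold preserves its boundary; so every $H\in\cH^s(X\times I,X\times I)$ maps $\bd X\times I$ onto itself. Together with total invariance (which makes $\pi_k\I(\bd X\times I)$ independent of $k$), this shows that every element of the set $\cE$ from the proof of Theorem~\ref{brownplus} maps $\cB:=\pi_0\I(\bd X\times I)$ onto $\bd X\times I$; as $\cB$ is compact this survives uniform limits, so it holds for every element of $\cF=\overline{\cE}$, and in particular for the homeomorphism $\beta$ constructed there. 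Since $\iota_t(\{\ux\in X_\infty^t:x_0\in\bd X\})=\iota_t(X_\infty^t)\cap\cB$ and $\beta$ is injective, applying $\beta$ gives $h_t(\{\ux\in X_\infty^t:x_0\in\bd X\})=\bd X$. Finally, $h_t$ is a homeomorphism of compact manifolds, so it carries $\bd X_\infty^t$ onto $\bd X$; therefore $\bd X_\infty^t=h_t\I(\bd X)=\{\ux\in X_\infty^t:x_0\in\bd X\}$.

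I expect the last part to be the main obstacle, and the essential move is to reopen the proof of Theorem~\ref{brownplus} rather than treat it as a black box: one needs that the homeomorphism it produces lies in $\cF$ and so inherits the boundary-preservation enjoyed by every element of $\cE$. Two features make this clean --- that every homeomorphism of a manifold automatically preserves the boundary (so no approximation ``rel $\bd X$'' is required), and that boundary-preservation is preserved under uniform convergence on the compact set $\cB$ --- but some care is still needed with the bookkeeping: checking that total invariance of $\bd X$ under $f_t$ forces $\pi_k\I(\bd X\times I)$ to be the same for every $k$, and that $\beta$ takes slice-intersections to slice-intersections because it is injective.
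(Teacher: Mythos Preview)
Your proof is correct and follows essentially the same route as the paper's: define $h_t=p_1\circ\beta\circ\iota_t$, deduce~(a) from the commuting square~\eqref{CD} via $\beta\circ\hat F\circ\beta^{-1}$, read~(b) off from $d(\beta,\pi_0)<\epsilon$ on a slice, and for the boundary statement reopen the proof of Theorem~\ref{brownplus} and use that every $H\circ\pi_k\in\cE$ carries boundary threads to $\bd X\times I$. If anything, your treatment of the last part is slightly more explicit than the paper's --- you spell out the passage to uniform limits on the compact set~$\cB$ and the use of injectivity of~$\beta$ to turn $\beta(\cB)=\bd X\times I$ into $\beta^{-1}(\bd X\times I)=\cB$, whereas the paper compresses this into ``which establishes the result''.
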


\begin{proof}
Given~$\epsilon>0$, let $\beta \colon  (X\times I)_\infty \to X\times I$ be
a \homeo\ satisfying~(a) and~(b) of Theorem~\ref{brownplus} (and
constructed as in the proof of the theorem). Define
$h_t\colon X_\infty^t\raw X$ by $h_t = p_1\circ\beta\circ\iota_t$, where
$p_1\colon X\times I \raw X$ is projection onto the first coordinate: thus
$\beta\circ\iota_t(\ux) = (h_t(\ux),t)$. Then each~$h_t$ is a
homeomorphism: it is injective and continuous because $\iota_t$,
$\beta$, and $p_1|_{X\times\{t\}}$ are; and it is surjective because
\mbox{$\beta\circ\iota_t(X_\infty^t)=X\times\{t\}$}.  Now
\begin{eqnarray*}
h_t\circ \hat{f}_t\circ h_t\I(x) &=& p_1 \circ (\beta\circ\iota_t)
\circ \hat{f}_t \circ (\beta\circ\iota_t)\I(x,t) \\
&=& p_1 \circ \beta \circ \hat{F}\circ \beta\I(x,t)
\end{eqnarray*}
by~\eqref{CD}, and so depends continuously on~$x$ and~$t$. Hence
$h_t\circ \hat{f}_t\circ h_t\I$ is a continuous family of
homeomorphisms of~$X$ as required.

For~(b), let~$\ux\in X_\infty^t$ and observe that
\[d(h_t(\ux),\pi_{0,t}(\ux)) = d(h_t(\ux), x_0) = d((h_t(\ux),t), (x_0,t)) =
d(\beta\circ\iota_t(\ux), \pi_0\circ\iota_t(\ux))<\epsilon\] since
$d(\beta,\pi_0)<\epsilon$.

For the final statement, if~$X$ is a compact manifold and $\bd X$ is
totally invariant under~$f_t$, then every element $\ux$ of
$X_\infty^t$ either has $x_i\in \bd X$ for all~$i$, or $x_i\not\in\bd
X$ for all~$i$. Now
\[\bd X_\infty^t = h_t\I(\bd X) = \{\ux\in
X_\infty^t\,:\,\beta\circ\iota_t(\ux)\in \bd X\times I\}.\]
Using the notation from the proof of Theorem~\ref{brownplus}, every
$\alpha=H\circ\pi_k\in\cE$ satisfies that \mbox{$\alpha\circ\iota_t(\ux) =
H(x_k,t)$} lies in $\bd X\times I$ if and only if $x_k\in\bd X$, which
establishes the result.
\end{proof}

\section{The parameterized Barge-Martin construction}
\label{sec:BBM}
The parameterized BBM construction starts with a family of maps
defined on a \BR of a compact manifold, with the additional property
that the family unwraps. We begin by defining these and related terms.

Let~$M$ be a compact manifold with non-empty boundary $\bd M$. A
subset~$E$ of~$M$ is said to be a \de{\BR}of~$M$ if there is a 
continuous map $\Psi\colon \bd M\times[0,1]\raw M$ with the following
properties:

\medskip

\begin{enumerate}[(1)]
\item $\Psi$ restricted to $\bd M \times [0, 1)$ is a 
\homeo\ onto $M - E$,
\item $\Psi(\eta, 0) = \eta$, for all $\eta\in\bd M$, and
\item $\Psi(\bd M \times \{1\}) = E$.
\end{enumerate}

\medskip

An alternative characterization is that $\Psi$ decomposes~$M$ into a
continuously varying family of arcs $\{\gamma_\eta\}_{\eta\in\bd M}$
defined by $\gamma_\eta(s)= \Psi(\eta,s)$, whose images are mutually
disjoint except perhaps at their final points. The arc $\gamma_\eta$
has initial point~$\eta$, final point in~$E$, and interior disjoint
from~$E$. Thus, in particular, each point $x\in M-E$ can be written
uniquely as $x=\Psi(\eta,s)$ for some $\eta\in\bd M$ and $s\in[0,1)$.
  The example of interest in the applications described in
  Section~\ref{apps} is when~$E$ is a graph embedded as the spine of a
  surface~$M$ with boundary (Figure~\ref{spine}).

\begin{figure}[htbp]
\begin{center}
\includegraphics[width=0.4\textwidth]{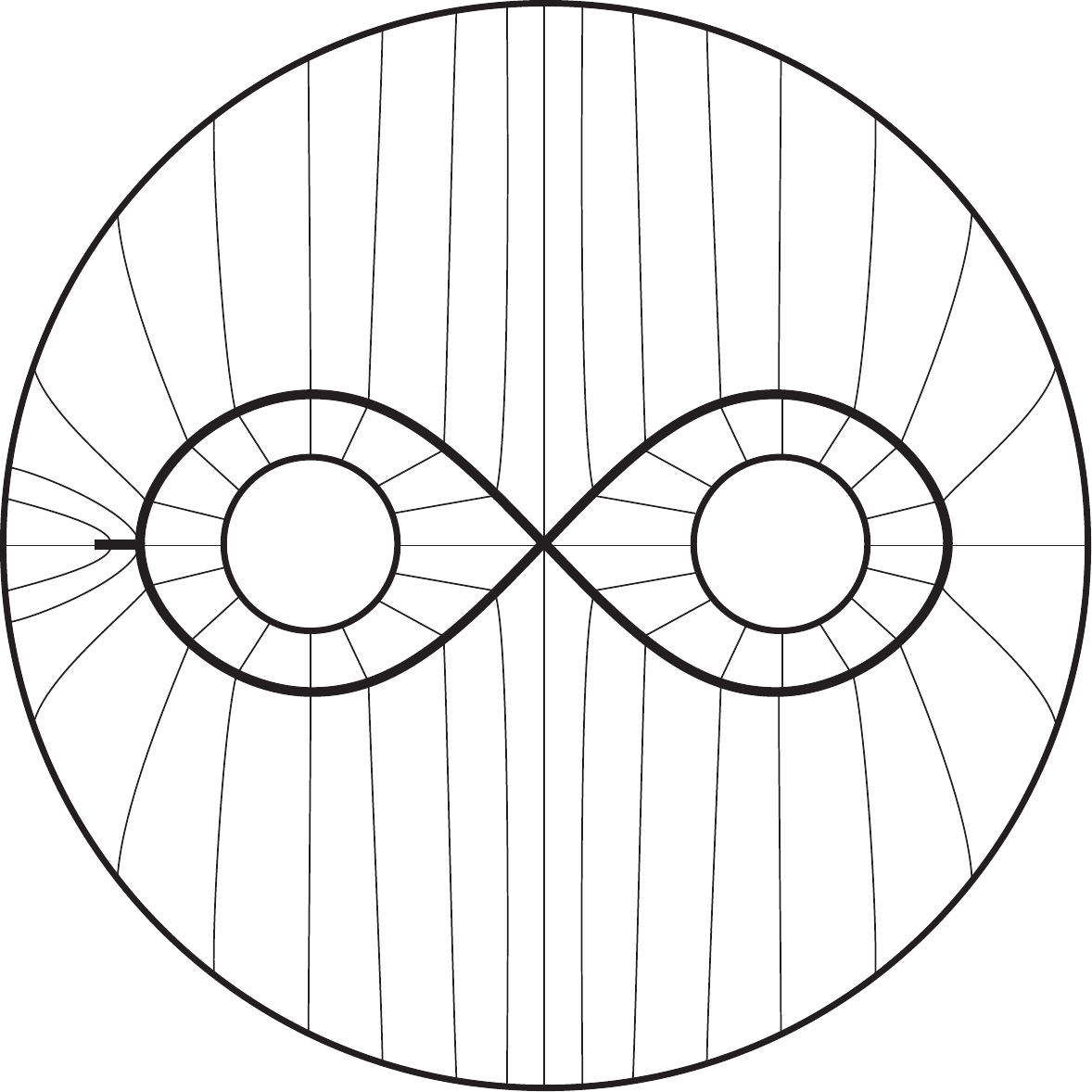}
\caption{The figure 8, with a spike, as a \BR of the pair of
  pants. Each valence~$k$ vertex is the endpoint of~$k$ of the arcs
  $\gamma_\eta$.}
\label{spine}
\end{center}
\end{figure}

Associated to~$\Psi$ is the strong deformation retract
$S\colon M\times[0,1]\raw M$ of $M$ onto~$E$ defined by $S(\Psi(\eta,s),t) =
\Psi(\eta, s+t(1-s))$. The corresponding retraction $R\colon  M\raw E$ is
defined by
\[R(\Psi(\eta,s)) = \Psi(\eta, 1).\]

If~$E$ is a \BR of $M$ with associated retraction $R\colon M\raw E$, then a
continuous map $f\colon E\raw E$ is said to \de{unwrap in $M$} if there is a
homeomorphism $\barf\colon M\raw M$ such that

\medskip

\begin{enumerate}[(1)]
\item $R\circ\barf_{\vert E} = f$, and
\item There is some~$k>0$ such that $\barf^k$ is the identity on~$\bd
  M$. 
\end{enumerate}

\medskip

Such a homeomorphism~$\barf$ is called an \de{unwrapping} of~$f$. A
continuous family $\{f_t\}_{t\in I}$ in $\cC(E,E)$ is said to
\de{unwrap in~$M$} if it has a continuous family $\{\barf_t\}_{t\in I}$ of
unwrappings. 

Finally, we say that~$f\in\cC(X,X)$ \de{stabilizes at
  iterate~$m$} if $f^{m+1}(X)=f^m(X)$ (there is no requirement for~$m$
to be the least such integer). If $f$ stabilizes at iterate~$m$ then
\[\ilim(X, f) = \ilim(f^m(X), f_{\vert f^m(X)}),\]
and the projection $\pi_k\colon \ilim(X, f)\to X$ has image $f^m(X)$ for
all~$k$. 

The parameterized BBM construction can now be stated: the
unparameterized version is obtained on taking the parameter space~$I$
to be a point. Roughly speaking, the theorem states that a continuous
family of continuous self-maps~$f_t$ of~$E$ which unwraps in~$M$ and
stabilizes at a common iterate gives rise to a continuous family of
self-homeomorphisms~$\Phi_t$ of~$M$ having global
attractors~$\Lambda_t$ on which the dynamics of~$\Phi_t$ is conjugate
to that of~$\hat{f}_t$. Moreover, the dynamics of $\Phi_t$
on~$\Lambda_t$ is semi-conjugate to that of $f_t$ on~$\bigcap_{k\ge 0}
f_t^k(E)$ by a semi-conjugacy which can be extended to a continuous
map $M\raw M$ arbitrarily close to the identity; and the
attactors~$\Lambda_t\subset M$ vary continuously with~$t$.

\begin{theorem}
\label{BBMparam}
Let~$M$ be a compact manifold with boundary $\bd M$, $E\subset M$ be a
\BR of~$M$, and $\{f_t\}_{t\in I}$ be a continuous family in
$\cC(E,E)$ which unwraps in~$M$. Suppose moreover that there is
some~$m>0$ such that every~$f_t$ stabilizes at iterate~$m$. Then for
each $\epsilon>0$ there is a continuous family $\{\Phi_t\}_{t\in I}$
in $\cH(M,M)$ such that

\medskip

\begin{enumerate}[(a)]
\item For each~$t\in I$ there is a compact $\Phi_t$-invariant
  subset~$\Lambda_t$ of~$M$ with the following properties:

\medskip

\begin{enumerate}[(i)]
\item $\Phi_t|_{\Lambda_t}\colon \Lambda_t\raw\Lambda_t$ is topologically
  conjugate to $\hat{f}_t\colon \ilim(E, f_t)\raw\ilim(E, f_t)$.
\item If $x\in M-\bd M$, the $\omega$-limit set $\omega(x,\Phi_t)$ is
  contained in~$\Lambda_t$.
\item There is some~$k>0$ such that each $\Phi_t^k$ is the identity
  on~$\bd M$.
\end{enumerate}

\medskip

\item There is a continuous family $\{g_t\}_{t\in I}$ in $\cC(M,M)$
  with $d(g_t, \id)<\epsilon$, $g_t(\Lambda_t)=f_t^m(E)$, and $f_t\circ
  g_t|_{\Lambda_t} = g_t\circ \Phi_t|_{\Lambda_t}$. 
\item For each~$t$, the semiconjugacy $g_t$ restricts to a bijection
  from the set of periodic points of $\Phi_t$ in~$\Lambda_t$ to the
  set of periodic points of~$f_t$.
\item The attractors~$\Lambda_t$ vary Hausdorff continuously with
  $t\in I$. 
\end{enumerate}
\end{theorem}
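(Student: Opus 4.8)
The plan is to realise the graph $\widehat\Lambda:=\bigcup_{t\in I}(\Lambda_t\times\{t\})\subseteq M\times I$ as the image of a single compact set under a single homeomorphism, and then to read off Hausdorff continuity of the slices $\Lambda_t$ from Hausdorff continuity of the slices of that compact set. Recall what the construction behind parts~(a)--(c) provides: a near-isotopy $\{G_t\}_{t\in I}$ of~$M$ (the near-homeomorphisms of the construction, which are not the semiconjugacies $g_t$ of~(b)) with $G_t|_E=f_t$ and $E$ invariant; the homeomorphisms $h_t=p_1\circ\beta\circ\iota_t\colon\ilim(M,G_t)\raw M$ of Corollary~\ref{ourthing}, where $\iota_t\colon\ilim(M,G_t)\hookrightarrow(M\times I)_\infty$ is the canonical embedding and $\beta\colon(M\times I)_\infty\raw M\times I$ is the homeomorphism of Theorem~\ref{brownplus} with $\beta\circ\iota_t(\ilim(M,G_t))=M\times\{t\}$; and the identification $\Lambda_t=h_t(\ilim(E,f_t))$, with $\ilim(E,f_t)\subseteq\ilim(M,G_t)$. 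Since $\beta\circ\iota_t(\ilim(E,f_t))\subseteq M\times\{t\}$, on which $p_1$ is injective, we have $\beta\circ\iota_t(\ilim(E,f_t))=\Lambda_t\times\{t\}$; and since $G_t|_E=f_t$, the sets $\iota_t(\ilim(E,f_t))$ ($t\in I$) are exactly the pieces of the canonical partition of $(E\times I)_\infty\subseteq(M\times I)_\infty$, the inverse limit of the fat map of $\{f_t\}$. Thus $\widehat\Lambda=\beta\big((E\times I)_\infty\big)$, with $\beta$ sending the $t$-slice $\iota_t(\ilim(E,f_t))$ to $\Lambda_t\times\{t\}$.

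It therefore suffices to show that $t\mapsto\iota_t(\ilim(E,f_t))$ is Hausdorff continuous in $(M\times I)_\infty$: granting this, the uniform continuity of $\beta$ (a continuous bijection between compact metric spaces) makes $t\mapsto\Lambda_t\times\{t\}=\beta(\iota_t(\ilim(E,f_t)))$ Hausdorff continuous in $M\times I$, and then $d_H(\Lambda_s,\Lambda_t)\le d_H(\Lambda_s\times\{s\},\Lambda_t\times\{t\})$ (pair $(\lambda,s)$ with a nearby $(\mu,t)$ and discard the second coordinate) gives the claim. Since $d_H(\iota_sA,\iota_tB)\le\max(d_H(A,B),d(s,t))$, it is in turn enough to prove Hausdorff continuity of $t\mapsto\ilim(E,f_t)$, and this is where I would use the stabilization hypothesis. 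Set $C_t:=\bigcap_{n\ge0}f_t^n(E)$; the hypothesis $f_t^{m+1}(E)=f_t^m(E)$ gives $C_t=f_t^m(E)$, so that $f_t$ maps $C_t$ onto itself. Moreover $t\mapsto f_t^m$ is continuous into $\cC(E,E)$ (composition on a compact space is continuous) and $\phi\mapsto\phi(E)$ is $1$-Lipschitz into the space of compact subsets of~$E$ with the Hausdorff metric, so $t\mapsto C_t$ is Hausdorff continuous. Given $\epsilon>0$, fix $k$ with $1/(k+2)<\epsilon$, and note that $\{f_r^i:0\le i\le k,\ r\text{ in a fixed compact neighbourhood of }t\}$ is a compact, hence equicontinuous, family of maps. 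For $\ux\in\ilim(E,f_t)$ every coordinate $x_i$ lies in $C_t$ and $x_j=f_t^{k-j}(x_k)$ for $j\le k$; for $s$ near $t$ I would choose $y_k\in C_s$ with $d(y_k,x_k)\le d_H(C_s,C_t)$, extend $y_k$ backwards to an $f_s$-orbit $y_k,y_{k+1},y_{k+2},\dots$ lying inside $C_s$ (possible since $f_s$ maps $C_s=f_s^{m+1}(E)$ onto itself), and put $y_j=f_s^{k-j}(y_k)$ for $0\le j<k$. Then $\uy=(y_0,y_1,\dots)\in\ilim(E,f_s)$, and for $j\le k$
\[d(y_j,x_j)\le d\big(f_s^{k-j}(y_k),f_s^{k-j}(x_k)\big)+d\big(f_s^{k-j}(x_k),f_t^{k-j}(x_k)\big),\]
a quantity controlled by $d_H(C_s,C_t)$ (through the common modulus of continuity) and by $\max_{0\le i\le k}d(f_s^i,f_t^i)$; both tend to $0$ as $s\raw t$, so $d_\infty(\uy,\ux)<\epsilon$ for $s$ close enough to $t$, uniformly in $\ux$. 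Interchanging $s$ and $t$ yields the required Hausdorff continuity of $t\mapsto\ilim(E,f_t)$.

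The main obstacle is this last point — the lower semicontinuity of $t\mapsto\ilim(E,f_t)$, i.e.\ the production, near a prescribed initial string of coordinates of a given point of $\ilim(E,f_t)$, of an honest point of $\ilim(E,f_s)$. This is precisely what the stabilization hypothesis secures: without it the cores $C_s$ could collapse discontinuously as $s\raw t$, and points of $\ilim(E,f_t)$ would then have no nearby counterpart in $\ilim(E,f_s)$; it is the backward-extendability of points of $C_s$, a consequence of $f_s(C_s)=C_s$, that makes the string $\uy$ above exist. Upper semicontinuity, by contrast, is automatic for any continuous family, since the sets $\iota_t(\ilim(E,f_t))$ partition the compact space $(E\times I)_\infty$. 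Everything downstream of the inverse-limit continuity — transport through $\beta$ and projection to~$M$ — is purely formal.
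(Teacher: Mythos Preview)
Your argument for part~(d) is correct and follows essentially the same route as the paper: both first establish Hausdorff continuity of $t\mapsto\ilim(E,f_t)$ inside $M^\N$ using the stabilization hypothesis to manufacture an approximating backward orbit in the nearby inverse limit, and then push this through the uniformly continuous homeomorphism~$\beta$ to obtain continuity of $t\mapsto\Lambda_t$. The only cosmetic difference is in how the approximating point $\uy$ is built---you pick $y_k\in C_s$ near $x_k$ via Hausdorff continuity of $t\mapsto f_t^m(E)$, whereas the paper writes $x_J=f_t^m(y)$ and sets $y_j=f_s^{m+J-j}(y)$---but these are equivalent bookkeeping choices.
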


\begin{proof}
Let $\Psi\colon \bd M \times[0,1]\raw M$ be the map expressing~$E$ as a \BR
of~$M$. Let $\phi\colon [0,1]\raw[0,1]$ be defined by $\phi(s)=2s$ for
$s\in[0,1/2]$ and $\phi(s)=1$ for $s\in[1/2,1]$, and define
$\Upsilon\colon M\raw M$ by $\Upsilon(\Psi(\eta,s)) = \Psi(\eta, \phi(s))$,
which is well defined since $\phi(1)=1$. Because $\phi$ is the uniform
limit of homeomorphisms $[0,1]\raw[0,1]$, the map $\Upsilon$ is a
near-homeomorphism.

Write $N(E)=\Psi(M\times[1/2,1])=\Upsilon\I(E)$: thus $N(E)$ is a
compact neighborhood of~$E$ which is homeomorphic to~$M$, by the
homeomorphism $S\colon  M\raw N(E)$ defined by $S(\Psi(\eta,s))=\Psi(\eta,
(s+1)/2)$.

Let $\{\barg_t\}$ be the unwrapping of the family~$\{f_t\}$, and
define the family $\{\barf_t\}$ in $\cH(N(E), N(E))$ by $\barf_t = S
\circ \barg_t \circ S\I$. Extend $\{\barf_t\}$ to a family in
$\cH(M,M)$ along the arc decomposition given by~$\Psi$: that is, if
$\lambda_t\colon \bd M\raw\bd M$ is the homeomorphism defined by
$\barf_t(\Psi(\eta,1/2)) = \Psi(\lambda_t(\eta), 1/2)$, then
$\barf_t(\Psi(\eta, s))=\Psi(\lambda_t(\eta), s)$ for
\mbox{$s\in[0,1/2]$}.

Now let $H_t = \Upsilon \circ \barf_t\colon  M \raw M$ for each $t\in
I$. Then $\{H_t\}$ is a near-isotopy, since $\Upsilon$ is a
near-homeomorphism and each $\barf_t$ is a homeomorphism. Moreover,
for each~$t\in I$ we have:

\medskip

\begin{enumerate}[(I)]
\item $H_t$ is equal to~$f_t$ on~$E$ (since $\Upsilon$ and $R$
  restrict to the same retraction $\Psi(\eta,s)\mapsto\Psi(\eta,1)$
  of~$N(E)$ onto~$E$, and $R\circ\barf_t|_{E}=f_t$);
\item for all $x\in M-\bd M$, there is some~$n\ge 0$ with $H_t^n(x)\in
  E$ (since $H_t(\Psi(\eta,s))=\Psi(\eta',2s)$ for some~$\eta'$ if
  $s\le 1/2$, and $H_t(\Psi(\eta,s))\in E$ if $s\ge 1/2$); and
\item there is some~$k>0$ such that $H_t^k=\id$ on $\bd M$ (since
  $H_t=\barf_t=\barg_t$ on $\bd M$).

\end{enumerate}

\medskip

Let $h_t\colon M_\infty^t = \ilim(M, H_t) \raw M$ be the homeomorphisms
given by Corollary~\ref{ourthing} (and constructed as in the proof of
the corollary), so that $\{\Phi_t\} = \{h_t\circ \hat{H}_t \circ
h_t\I\}$ is a continuous family in~$\cH(M,M)$, and $d(h_t,
\pi_{0,t})<\epsilon$ for all~$t$. We now show that~(a), (b), (c), and
(d) in the theorem statement hold for this family~$\{\Phi_t\}$.

\medskip\noindent\textit{Property (a):} 
 By~(I) above, there is a homeomorphic copy~$\Omega_t$ of
  $\ilim(E, f_t)$ embedded in $M_\infty^t$, namely
\[\Omega_t = \{\ux\in M_\infty^t\,:\, x_k\in E \text{ for all }k\in\N\},\]
 on which the restriction of~$\hat{H}_t$ is topologically conjugate to
 $\hat{f}_t$. Hence $\Lambda_t = h_t(\Omega_t)$ is a compact
 $\Phi_t$-invariant subset of~$M$ with $\Phi_t|_{\Lambda_t}$
 topologically conjugate to $\hat{f}_t\colon \ilim(E, f_t) \raw \ilim(E,
 f_t)$.

By~(II), $\omega(\ux, \hat{H}_t)\subset \Omega_t$ for all $\ux\in
M_\infty^t$ with $x_0\not\in\bd M$: that is, by the final statement of
Corollary~\ref{ourthing}, for all $\ux\not\in\bd M_\infty^t$. Hence
$\omega(x, \Phi_t) \subset \Lambda_t$ for all~$x\not\in\bd
M$. Similarly, (III) gives that $\hat{H}_t^k(\ux) = \ux$ for all $\ux\in
M_\infty^t$ with $x_0\in\bd M$ (i.e. for all~$\ux\in\bd M_\infty^t$),
from which it follows that $\Phi_t^k$ is the identity on $\bd M$.

\medskip\noindent\textit{Property (b):} 
Let $g_t = \pi_{0,t}\circ h_t\I \colon  M\to M$: that is, using the
  notation of Theorem~\ref{brownplus} and Corollary~\ref{ourthing},
  $g_t(x)=y_0$, where $\beta\I(x,t)=((y_0,t), (y_1,t), \ldots)$. It
  follows from the continuity of~$\beta\I$ that $\{g_t\}_{t\in I}$ is
  a continuous family in $\cC(M,M)$. Moreover, $d(g_t, \id)<\epsilon$
  for all~$t$; and $g_t(\Lambda_t) = g_t\circ h_t(\Omega_t) =
  \pi_{0,t}(\Omega_t) = f_t^m(E)$.  Now
\[g_t\circ\Phi_t = \pi_{0,t}\circ\hat{H}_t\circ h_t\I = H_t\circ
\pi_{0,t}\circ h_t\I = H_t\circ g_t \colon  M \raw M,\]
so that $g_t\circ\Phi_t|_{\Lambda_t} = f_t\circ g_t|_{\Lambda_t}$
by~(I).

\medskip\noindent\textit{Property (c):} 
This is an immediate consequence of the fact that $\pi_{0,t}$
  restricts to a bijection from the set of periodic points of~$\hat
  H_t$ to the set of periodic points of $H_t$, and $\Per(\Phi_t) =
  h_t(\Per(\hat H_t))$. 

\medskip\noindent\textit{Property (d):} 
To show the Hausdorff continuity of $t\mapsto \Lambda_t$, we
  first show that the function $t\mapsto\Omega_t \subset M_\infty^t
  \subset M^\N$ is continuous as a function $I\raw C(M^\N)$ into the
  set of compact subsets of $M^\N$ with the Hausdorff metric. For
  this, it suffices to show that for all~$\delta>0$ there is
  some~$\eta>0$ such that if $d(s,t)<\eta$ then every point of
  $\Omega_t$ is within~$\delta$ of a point of $\Omega_s$.

Recall that all of the~$f_t$ stabilize at iterate~$m$ so that
$f_t\colon f_t^m(E)\raw f_t^m(E)$ is surjective, and 
\[\Omega_t = \{\ux\in M_\infty^t\,:\, x_k\in f_t^m(E) \text{ for all }
k\in\N\}.\] 
Pick $J>1/\delta$ and $\eta>0$ such that
\begin{equation}
\label{uniform}
d(s,t) < \eta \implies d(f_s^j, f_t^j)<\delta \qquad \text{for } 1
\le j < J+m
\end{equation}
using the (uniform) continuity of $t\mapsto f_t^j$ for each~$j$.

Let~$s,t\in I$ with $d(s,t)<\eta$. Let $\ux\in\Omega_t$, so that
$\ux=(x_0,x_1,x_2,\ldots)$ with $x_k \in f_t^m(E)$ and
$f_t(x_{k+1})=x_k$ for all~$k$. Let~$y\in E$ be such that $x_J =
f_t^m(y)$. Define an element \mbox{$\uy=(y_0,y_1,y_2,\ldots)$} of
$(f_s^m(E))^\N$ by setting $y_j = f_s^m(f_s^{J-j}(y))$ for $0\le j\le
J$, and then choosing $y_j\in f_s^m(E)$ for $j>J$ inductively to
satisfy $f_s(y_{j})=y_{j-1}$.

Then $\uy\in\Omega_s$, and $d(\ux, \uy)<\delta$ as required, since 
\[d(x_j,y_j) = d(f_t^{m+J-j}(y), f_s^{m+J-j}(y)) < \delta\qquad  \text{ for
}  0\le j\le J \]
by~\eqref{uniform}, and $1/(j+2) < \delta$ for $j>J$.

To complete the proof that $t\mapsto\Lambda_t$ is continuous, recall
(using the notation of the proof of Corollary~\ref{ourthing}) that
$\Lambda_t = h_t(\Omega_t) =
p_1\circ\beta\circ\iota_t(\Omega_t)\subset M$. It is required to show
that for all~$\delta>0$ there is some~$\eta>0$ such that, if
$d(s,t)<\eta$, then every point of~$\Lambda_t$ is within~$\delta$ of a
point of~$\Lambda_s$. Now the map
\[K\colon \{(\ux,t)\in M^\N\times I \,:\, \ux\in\Omega_t\} \to (M\times
I)_\infty\]
defined by $K(\ux,t)=\iota_t(\ux)$ is continuous, so we can pick
$\xi>0$ such that if $d((\ux,t),(\uy,s))<\xi$ then
$d(p_1\circ\beta\circ K(\ux,t), p_1\circ\beta\circ K(\uy, s)) <
\delta$. By the first part of the proof, there is some~$\eta>0$ such
that if $d(s,t)<\eta$ and $\ux\in\Omega_t$, then there is some
$\uy\in\Omega_s$ with $d((\ux,t), (\uy, s)) < \xi$. 

Then, given $s,t\in I$ with $d(s,t)<\eta$ and $x\in\Lambda_t$, write
$x=p_1\circ\beta\circ K(\ux,t)$ for some $\ux\in\Omega_t$. Let
$\uy\in\Omega_s$ with $d((\ux,t),(\uy,s))<\xi$: then
$y=p_1\circ\beta\circ K(\uy,s)$ lies in~$\Lambda_s$ with
$d(x,y)<\delta$ as required.

\end{proof}

\begin{remarks}
\label{BBM-rks}
\begin{enumerate}[(a)]
\item The proof of part~(a) of the theorem does not require the family to
stabilize uniformly. 
\item If condition~(2) in the definition of an unwrapping is removed
  (so that an unwrapping $\barf\in\cH(M,M)$ is only required to
  satisfy $R\circ \barf_{\vert E}=f$), then the theorem still holds
  except for part~(a)(iii).
\item By part (a)(ii), if each boundary component of~$M$ is collapsed
  to a point then each~$\Phi_t$ becomes a homeomorphism of the
  resulting space (which in the surface case is a surface without
  boundary), for which every point except for a finite number of
  repelling periodic points has $\omega$-limit set contained
  in~$\Lambda_t$.
\end{enumerate}
\end{remarks}

\section{Applications}
\label{apps}

\subsection{Unimodal dynamics}
\label{sec:unimodal}
The inverse limits of members of unimodal families of interval
endomorphisms such as the tent family
\begin{equation*}
T_s(x) =  \min\{sx, s(1 - x)\}, \qquad 1\le s\le 2 
\end{equation*}
and the quadratic family
\[
f_a(x) = a-x^2, \qquad -1/2 \le a\le 2
\]
have been intensively studied: a major recent advance is the proof by
Barge, Bruin, and {\v S}timac of Ingram's conjecture, that if \mbox{$1
  \leq t < s \leq 2$} then the inverse limits $\ilim(I, T_t)$ and
$\ilim(I, T_s)$ are not homeomorphic (see \cite{bargebruinstimac} and
the additional references therein).  

Barge and Martin~\cite{bargemartin} showed that any map of the
interval unwraps in the disk, and their construction gives a
continuous family of unwrappings of any unimodal family. Thus
Theorem~\ref{BBMparam} provides a continuously varying family
\mbox{$\Phi_s:D^2\raw D^2$} of \homeos\ of the disk, with $\Phi_s$
having $\ilim(I, T_s)$ as a global attracting set. Moreover, since the
family $T_s$ uniformly stabilizes at iterate one, these attractors
vary continuously in the Hausdorff topology, although no two are
homeomorphic. The family $\Phi_s$ has dynamics which increases
monotonically with~$s$: for example, the number of periodic orbits of
each period increases with~$s$, as does the topological entropy
$\htop(\Phi_s)=\htop(f_s)=\log(s)$.

Extending the range of parameters to $s\in [0,2]$ illustrates the need
for the stabilization hypothesis in Theorem~\ref{BBMparam}. For
$s\in[0,1)$ we have $T_s^k([0,1])\searrow \{0\}$ as $k\raw\infty$,
so these $T_s$ never stabilize and $\ilim(I, T_s)$ is a
point. On the other hand, $T_1$ is the identity on
$T_1([0,1])=[0,1/2]$, so $T_1$ stabilizes at iterate~1 and $\ilim(I,
T_1)$ is an arc. Hence there is a discontinuous change
in the attractor at~$s=1$.

Theorem~\ref{BBMparam} may likewise be applied to the quadratic family,
yielding a family of of plane homeomorphisms with monotonically
increasing dynamics.

All of the constructions in this paper are strictly in the
$C^0$-category.  Embeddings of the inverse limits of members of the
tent family as attractors of \homeos\ with varying degrees of
increased regularity have been carried out
in~\cite{mis,szczechla,barge,bruin}.  The fascinating question of
which inverse limits and families of inverse limits can be embedded as
attractors for $C^r$-diffeomorphisms is little understood. Barge and
Holte~\cite{bargeholte} show that for certain parameter ranges the
real H\'enon attractor is conjugate to an inverse limit from the quadratic
family. For example, for hyperbolic values of~$a$ and small
enough~$b$, the H\'enon map $H_{a,b}$ restricted to its attractor is
conjugate to the inverse limit $\ilim(I,f_a)$. As has been pointed out
by several authors (for example~\cite{HolWhi,HolWil}), this cannot be
extended to a uniform band of small values of $b$ in parameter space,
since bifurcation curves for periodic orbits in the H\'enon family
cross arbitrarily close to~$b=0$. In fact, the ``antimonotonicity"
results of~\cite{KKY} show that even if it were possible to embed the
inverse limit of the unimodal family as a family of diffeomorphisms of
the plane, it is not possible to do so with high regularity. Much of
the work done to understand infinitely renormalizable H\'enon
maps~\cite{dCLM,LyuMar2} is based on comparisons between them and the
inverse limits of infinitely renormalizable unimodal maps.

The relationship of the inverse limit of the complex quadratic family
to the complex H\'enon family was used to great advantage
in~\cite{HOV1,HOV2}.  More generally, the dynamics of families of
homeomorphisms of $\R^2$ and $\C^2$ are often related to the inverse
limits of 1-dimensional endomorphisms of graphs and ($\R$-)trees and
to branched surfaces (see~\cite{eqgpa}).

\subsection{Rotation sets}
\label{sec:rotation}
If $\Phi\colon M\raw M$ is a map of a compact, smooth manifold which acts as
the identity on $H_1(M;\R)$, one may define the rotation vector under
$\Phi$ of a point $x\in M$. The usual way to do this is to lift $\Phi$
to $\tPhi$ on the universal free Abelian cover $\tM_A$, and then use
the lifts of a basis of closed one-forms to measure the average
displacements of points $\tx\in\tM_A$ under $\tPhi$ (see, for example,
\S3.3 
of~\cite{bdabelian}).  Because $\Phi$ acts as the identity on
$H_1(M;\R)$, $\tPhi$ commutes with the deck transformations of
$\tM_A$, and so the rotation vector is independent of the choice of
lift $\tx\in\tM_A$ of a point $x\in M$.  However, the rotation vector
does depend in a simple way on the choice of lift $\tPhi$. After
fixing a lift $\tPhi$ one defines the rotation set $\rho(\tPhi)$ (or
$\rho(\Phi)$ if a lift is understood) as the collection of rotation
vectors of points $\tx\in\tM_A$.

Now consider a self-map $f\colon E\raw E$ of a \BR $E$ of~$M$.  Assume that
$f$ acts as the identity on $H_1(E;\R)$ and unwraps to a homeomorphism
\mbox{$\barf\colon M\raw M$} with $\barf_{\vert \bd M} = \id$. Let
$\Phi\colon M\raw M$ be the BBM-extension \homeo\ obtained from
Theorem~\ref{BBMparam} by taking the parameter space~$I$ to be a point
and $\epsilon < \diam(M)/10$. Since $E$ is a strong deformation
retract of $M$, we also have that $\Phi_* = \id$ on $H_1(M;\R)$. Thus
both the rotation sets $\rho(\Phi)$ and $\rho(f)$ can be defined. They
are essentially the same, provided that care is taken to choose
corresponding lifts as will now be described.

By construction $\Phi_{\vert \bd M} = \id$, and we choose the lift
$\tPhi$ to $\tM_A$ which has \mbox{$\tPhi_{\vert \bd \tM_A} = \id$}.
Now we also lift the map $g\colon M\raw M$ with $\td(g,\id) < \epsilon$
given by Theorem~\ref{BBMparam}(b) to a $\tg\colon \tM_A\raw\tM_A$ with
$\td(\tg,\id) < \epsilon$.  If $\tE\subset\tM_A$ and
$\tLambda\subset\tM_A$ are the lifts of $E$ and $\Lambda$ inside the
universal free Abelian cover, we choose the lift $\tf\colon \tE\raw\tE$ of
$f$ so that $\tf\circ\tg(\tz) = \tg \circ \tPhi(\tz)$ for all
$\tz\in\tLambda$. Now the rotation set of a map is the same as its
rotation set restricted to the recurrent set. Since by the BBM
construction the recurrent set of $\Phi$ is contained in $\Lambda\cup
\bd M$, we have $\rho(\tPhi) = \rho(\tPhi_{\vert \tLambda}) \cup
\{0\}$.  Finally, since $\td(\tg,\id) < \epsilon$ we get that for any
$\tz\in\tLambda$, $d(\tPhi^k(\tz), \tf^k(\tg(\tz))) < \epsilon$ for
all $k\in\Z$. Thus since $\tg(\tLambda) = \tE$, we have
$\rho(\tPhi_{\vert \tLambda}) = \rho(\tf)$, and so
\begin{equation}\label{rot}
\rho(\tPhi) = \rho(\tf) \cup \{0\}.
\end{equation}

By Theorem~\ref{BBMparam}, \eqref{rot} also holds for parameterized
families and the attractors in the BBM-extension family vary
continuously provided that the family stabilizes uniformly. An
application is given in the authors' forthcoming paper {\em New
  rotation sets in a family of torus homeomorphisms}, in which we
consider a family of maps~$k_t$ of the figure eight $E$ embedded as
the spine of the two torus minus a disk. The resulting BBM-extension
family is extended to a family of torus \homeos~$K_t$ as in
Remark~\ref{BBM-rks}~(c). The intricate sequence of bifurcations of the
rotation sets of the family~$k_t$ can be described in detail using
kneading theory techniques, and by~\eqref{rot} the rotation sets of
the family~$K_t$ of torus homeomorphisms undergo the same
bifurcations.

\subsection{The standard family of circle maps}
\label{sec:standard}
The standard family of degree-one circle maps was introduced by
Arnol'd~\cite{arnold}. This two-parameter family
\mbox{$f_{b,\omega}\colon S^1\raw S^1$}, for $b\geq 0$ and $0 \leq \omega
\leq 1$, is defined via its lifts $\tf_{b,\omega}\colon \R\raw\R$, which are
given by
\begin{equation*}
\tf_{b,\omega}(x) = x + \omega + \frac{b}{2\pi}\sin(2 \pi x).
\end{equation*}
The dynamics and bifurcations of this family have been much studied
(see for example~\cite{bdcircle,brucks,rempe}).  The main objects of
interest in its parameter space are the so-called Arnol'd tongues
$T_r$ given by
\begin{equation*}
T_r = \{(b,\omega) \,:\, r \in\rho(f_{b,\omega})\}.
\end{equation*}

The core circle of an annulus $A$ is a \BR of~$A$, and the family
$f_{b,\omega}$ unwraps in~$A$.  Further, since each $f_{b,\omega}$ is
onto, the family uniformly stabilizes at iterate zero. Thus, after
restricting to the compact parameter space $b\in[0,b^*]$ for some
large~$b^*$, Theorem~\ref{BBMparam} yields a continuous family of
annulus \homeos\ $\Phi_{b,\omega}$ each having an attracting set
$\Lambda_{b,\omega}$ homeomorphic to $\ilim(S^1, f_{b,\omega})$ and,
by \eqref{rot}, $\rho(\Phi_{b,\omega}) = \rho(f_{b,\omega}) \cup
\{0\}$.

By composing each $\Phi_{b,\omega}$ with an appropriate lateral push
on and near on the boundary we can obtain a new family of
\homeos\ $\Phi_{b,\omega}^0$ such that the rotation numbers of each
$\Phi_{b,\omega}^0$ restricted to $\bd A$ are contained in
$\rho(f_{b,\omega})$. We then have that
\begin{equation*} 
\rho(\Phi_{b,\omega}^0) = \rho(f_{b,\omega}) 
\end{equation*}
for all $(b,\omega)\in[0,b^*]\times[0,1]$.

When $b\leq 1$, each $f_{b,\omega}$ is a \homeo\ and so the attractor
of $\Phi_{b,\omega}^0$ is an invariant circle on which the dynamics is
topologically conjugate to $f_{b,\omega}$. When $b> 1$, however, the
Arnol'd tongues $T_r$ for rational $r$ begin to overlap and for
irrational $r$, $T_r$ opens from a Lipschitz curve into a
tongue. These changes are accompanied by an elaborate sequence of
bifurcations which are all shared by the family $\Phi_{b,\omega}^0$.

\begin{acknowledgements} 
We would like to thank Marcy Barge for useful conversations.
\end{acknowledgements}

\bibliography{familyrefs}

\affiliationone{
Philip Boyland\\
Department of Mathematics \\
University of Florida \\
372 Little Hall\\
Gainesville, FL 32611-8105, USA
\email{boyland@math.ufl.edu}
}
\affiliationtwo{
Andr\'e de Carvalho\\
Departamento de Matem\'atica Aplicada\\
IME-USP\\
Rua Do Mat\~ao 1010\\
Cidade Universit\'aria\\
05508-090 S\~ao Paulo SP, Brazil
\email{andre@ime.usp.br}
}
\affiliationthree{
Toby Hall\\
Department of Mathematical Sciences\\
University of Liverpool\\
Liverpool L69 7ZL, UK
\email{tobyhall@liv.ac.uk}
}

\end{document}